\newcommand{\de}{\mathrm{d}}
\newcommand{\rank}{\mathrm{rank}}
\newcommand{\N}{\mathbb{N}}
\newcommand{\Z}{\mathbb{Z}}
\renewcommand{\P}{\mathbb{P}}
\newcommand{\C}{\mathbb{C}}
\newcommand{\Pic}{\mathrm{Pic}}
\newtheorem{lemma}{Lemma}[section]
\newtheorem{teo}[lemma]{Theorem}%[section]
\newtheorem{cor}[lemma]{Corollary}%[section]
\newtheorem{prop}[lemma]{Proposition}%[section]
\theoremstyle{definition}
\title{On the surjectivity of weighted Gaussian maps}
\author{E. Ballico, L. Pernigotti}
\begin{document}

\maketitle

\vspace{0,4cm}
\begin{center}\parbox{0.8\textwidth}{
\begin{small}
ABSTRACT: We study the surjectivity of suitable weighted Gaussian maps $\gamma_{a,b}(X,L)$ which provide a natural generalization of the standard Gaussian maps and encode
the local geometry of the locus $\mathfrak{Th}^r_{g,h}\subset \mathcal{M}_g$ of curves endowed with an $h$-th root $L$ of the
canonical bundle satisfying $h^0(L)\geq r+1$. In particular, we get a bound on the dimension of its Zariski tangent space, which turns out to be sharp in the special case $r=0$.  
Finally, we describe this locus in the case of complete intersection curves.

\vspace{0,4cm}

\textsc{Key words}: Gaussian maps, Higher spin curves, Higher theta-characteristics, Complete intersections.   

\vspace{0,4cm}

\textsc{Mathematics Subject Classification (2010)}: 14H10, 14M10.
\end{small}
}\end{center}

\section{Introduction}

The theory of Gaussian maps on curves was developed by Jonathan Wahl in \cite{W87} and \cite{W90}. We recall that they are defined as
\[
\Phi_L:R(L)\to H^0(\Omega_X^1\otimes L^{\otimes 2}), \qquad  \sigma\otimes \tau  \mapsto  \tau\de\sigma-\sigma\de\tau,
\]
where $X$ is a smooth projective variety over $\C$, $L\in \Pic(X)$ a line bundle of positive degree and $R(L)$ is the kernel of the multiplication
map $H^0(L)\otimes H^0(L)\to H^0(L^{\otimes 2})$. Furthermore, as for instance in \cite{Far05}, it makes sense to define the Gaussian map also as the restriction $\psi_L:={\Phi_L}_{|\Lambda^2 H^0(L)}$ since the first
map always vanishes on symmetric tensors. The interpretation of the name ``Gaussian'' can be found for instance in \cite{W90}: if 
$X=C$ is a curve embedded in a projective space $\P^n$ and $L=\mathcal{O}_C(1)$, one can consider the Gauss mapping $C\to \mathrm{Gr}(1,n)$ sending each point of the curve 
to its tangent line in $\P^n$. The composition of this map with the Pl\"ucker embedding $\mathrm{Gr}(1,n)\hookrightarrow \P^N$ gives rise to the ``associated curve'' $\phi:C\to\P^N$ and the
restriction of the hyperplane section $\phi^*$ of $\phi$ corresponds the so-called Gaussian map $\psi_L$.

The original interest in these maps came from the study of $\Phi_{K_C}$, where $K_C$ is the canonical bundle on a smooth curve (see for instance \cite{CHM88} and \cite{CLM00})
and in general they have been explored in particular when $X$ is a curve, in relation with the deformation theory of the vertex of the cone over $X$ (see \cite{W87}).
More recently, the first named author joint with Claudio Fontanari provided a generalization of these maps in \cite{BF06}. They defined the so-called ``weighted Gaussian maps'' as
\begin{equation}\label{eq:gammaab}\begin{array}{cccc}
\gamma_{a,b}(X,L): & H^0(X,L^{\otimes a})\otimes H^0(X,L^{\otimes b}) & \to &  H^0(\Omega^1_X\otimes L^{\otimes a}\otimes L^{\otimes b})\\
		   & \sigma\otimes \tau & \mapsto & b\tau\de\sigma-a\sigma\de\tau.
\end{array}\end{equation}
where $X$ is always a smooth projective variety, $L\in\mathrm{Pic}(X)$ and $a, b>0$ are two positive integers. When $a=b=1$ we recover the standard Gaussian map.

In the first section, following the approach of \cite{W90}, we investigate the surjecti\-vi\-ty of the map $\gamma_{a,b}(X,L)$ by studying first the weighted Gaussian maps
for $X=\P^n$ and $L=\mathcal{O}_{\P^n}(e)$, with $e\in \N$ a generic positive
integer, and then the restriction map given by $H^0(\P^n,\Omega^1_{\P^n}(t))\to H^0(X,\Omega^1_X(t))$. 
In particular, we prove a general result (Theorem \ref{teo}) from which we deduce the following.
 
\vspace{0.5cm}
\noindent\textbf{Corollary \ref{cor1}.}\textit{
 Let $C$ be a smooth curve of genus $g$ and $L\in\mathrm{Pic}(C)$ a line bundle of degree $\deg(L)\geq 2g+2$. Denote by $X\subset \P^n$ the linearly normal embedding of $C$ induced by $|L|$.
 Then the map $\gamma_{a,b}(X,\mathcal{O}_X(1))$ is surjective for all positive integers $a,b$ such that $a+b\geq 3$.}
 
\vspace{0.5cm}
\noindent\textbf{Corollary \ref{cor2}.}\textit{
 Let $C$ be a smooth curve of genus $g\geq 3$. Assume that $C$ is neither hyperelliptic nor trigonal nor isomorphic to a plane quintic and let $X\subset \P^{g-1}$ be its canonical model. Then
 the map $\gamma_{a,b}(X,\omega_X)$ is surjective for all $a,b>0$ such that $a+b\geq 4$.
}

\vspace{0.5cm}
\indent In the second section we relate the weighted Gaussian maps $\gamma_{1,h-1}(C,L)$, $h\ge 2$, to the locus $\mathfrak{Th}^r_{g,h}\subset\mathcal{M}_g$ defined,
 for $h\ge 2$ an arbitrary integer, by
 \begin{equation}\label{deflocus}
 \mathfrak{Th}^r_{g,h}:=\{[C]\in \mathcal{M}_g \ \mid \ \exists L\in\mathrm{Pic}(C) \text{ s.t. } h^0(C,L)\geq r+1, \ hL=K_C\}.
 \end{equation}
 It is a generalization for $h$-spin curves of the locus studied by J. Harris in \cite{H82} for theta-characteristics. 
 The case in which $h$ is even  was considered in \cite{Fon02}, but the proof of the odd case is identical (see Theorem \ref{teofon}). 
 In particular, J. Harris proved that each component of the locus
 \[
 \mathcal{S}^r_g:=\{[C,L]\in \mathcal{S}_g \ | \ h^0(L)\geq r+1, \quad h^0(L)\equiv r+1 \quad \mathrm{mod}\quad 2\}
 \]
 has codimension at most ${r+1 \choose 2}$ in $\mathcal{S}_g$. We recall that $\mathcal{S}_g$ is the moduli space of pairs $[C,L]$ where $C$ is a genus $g$ curve and $L\in\Pic(C)$ is a 
 theta-characteristic on $C$, that is, a square root of its canonical bundle. In \cite{Far05} G. Farkas showed that for $r=1,2,\ldots,9$ and $11$ there exists an explicit integer $g(r)$ such that
 for all $g\geq g(r)$ the moduli space $\mathcal{S}^r_g$ has at least one component of codimension exactly ${r+1 \choose 2}$ and he made a conjecture (recently
 proved by L. Benzo in \cite{Be13}) on the existence of a component attaining
 the maximum codimension for any $r\geq 1$ and $g\geq {r+2 \choose 2}$. His proof is based on the connection between Gaussian maps and spin curves provided by a tangent space computation done by Nagaraj in \cite{N90},
 whose main ingredient is
 the identification $T_{[C,L]}\mathcal{S}^r_g\cong\mathrm{Im}(\psi_L)^\perp$. In this work we use the analogous relation between the weighted Gaussian map and the locus
 $ \mathfrak{Th}^r_{g,h}$
 proved in \cite[Theorem 3]{Fon02} for $h$ even: there is an identification between the tangent space $T_C\mathfrak{Th}^r_{g,h}$ and the dual of the cokernel $\mathrm{Coker}(\gamma_{1,h-1}(C,L))$.
 As claimed before, the same relation holds identically also when $h$ is odd. Using these facts we prove the following result.
   
\par
\quad\\
\textbf{Theorem \ref{teo:sect3}.}\textit{
  For every $g,h\geq 2$ and every $[C] \in \mathfrak{Th}^0_{g,h}$ with an $h$-theta $L$ satisfying $h^0(L)=1$, the Zariski tangent space at $[C]$
  has codimension $(g-1)(h-2)/h$ in the tangent space $H^0(C,K _C^{\otimes 2})^\vee $ of the local deformation space of $C$.
}

\par
\quad\\
\indent We conclude our work with a focus on complete intersection curves. In Theorem \ref{teofinal} we
 prove that, for general $m$ and $r$, if $\mathfrak{Th}^r_{g,h}$ does contain a complete intersection, then $\mathfrak{Th}^r_{g,h}$ has a component
 whose general element is a complete intersection.  
  We work over the complex field $\C$.
  
  This work is part of the Ph.D. Thesis of the second named author and it is partially supported by MIUR (Italy) and by GNSAGA of INdAM.
  
  We thank the referee for several useful remarks.
 
\section{Surjectivity}
 
Let $X$ be a smooth projective variety and $L\in\mathrm{Pic}(X)$. Let $b>a>0$ be two integers. 
Let us fix a very ample line bundle $\mathcal{O}_X(1)$ on $X$ and set $n=h^0(\mathcal{O}_X(1))-1$. Then we use $|\mathcal{O}_X(1)|$ to embed $X$ inside $\P^n$.
Following \cite{W87}, for any $e\in\N_{>0}$ and for any integers $b>a>0$ we have the commutative diagram 
  \begin{equation}\label{diagram1}\begin{diagram}
 H^0(\P^n,\mathcal{O}_{\P^n}(ae))\otimes H^0(\P^n,\mathcal{O}_{\P^n}(be)) & \rTo^{\gamma_{a,b}(\P^n,\mathcal{O}_{\P^n}(e))} & H^0(\P^n,\Omega^1_{\P^n}(ae+be)) \\
  \dTo & & \dTo_{\alpha_{ae+be,X}} \\
  H^0(X,\mathcal{O}_{X}(ae))\otimes H^0(X,\mathcal{O}_{X}(be)) & \rTo^{\gamma_{a,b}(X,\mathcal{O}_{X}(e))} & H^0(X,\Omega^1_{X}(ae+be)).
 \end{diagram}\end{equation}
As in \cite{W87}, we show that the top map is always surjective.
\begin{lemma}\label{lemma1}
 For any $e\in\N_{>0}$ and for any integers $b>a>0$ the map 
 \[
 \gamma_{a,b}(\P^n,\mathcal{O}_{\P^n}(e)): H^0(\P^n,\mathcal{O}_{\P^n}(ae))\otimes H^0(\P^n,\mathcal{O}_{\P^n}(be))\to H^0(\P^n,\Omega^1_{\P^n}(ae+be))
 \]
 is surjective.  
 In particular, the map $\gamma_{a,b}(X,\mathcal{O}_{X}(e))$ is surjective if the map $\alpha_{ae+be,X}$ is surjective.
\end{lemma}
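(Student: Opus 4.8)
The plan is to make the target space completely explicit through the Euler sequence and then reduce the statement to an elementary combinatorial spanning problem for monomials. Setting $t=(a+b)e=ae+be$ and twisting the Euler sequence $0\to\Omega^1_{\P^n}\to\mathcal{O}_{\P^n}(-1)^{\oplus(n+1)}\to\mathcal{O}_{\P^n}\to 0$ by $\mathcal{O}_{\P^n}(t)$, the long exact sequence identifies $H^0(\P^n,\Omega^1_{\P^n}(t))$ with the kernel of the multiplication map $H^0(\mathcal{O}(t-1))^{\oplus(n+1)}\to H^0(\mathcal{O}(t))$, $(f_0,\dots,f_n)\mapsto\sum_i x_if_i$. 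Writing a section as a formal form $\sum_i f_i\,\de x_i$ with $\deg f_i=t-1$, the membership condition is exactly the Euler relation $\sum_i x_if_i=0$. The first step is to observe that this relation is homogeneous for the monomial grading, so that $H^0(\Omega^1_{\P^n}(t))=\bigoplus_{|\gamma|=t}V_\gamma$, where (writing $u_i$ for the $i$-th standard basis vector of $\Z^{n+1}$) the summand $V_\gamma=\{(c_i)_{\gamma_i>0}:\sum_i c_i=0\}$ parametrises the forms $\sum_i c_i\,x^{\gamma-u_i}\,\de x_i$ supported on the single monomial $x^\gamma$.

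Next I would compute the weighted Gaussian map on monomials. For $\sigma=x^\alpha$ with $|\alpha|=ae$ and $\tau=x^\beta$ with $|\beta|=be$, the Leibniz rule and Euler's identity give
\[
b\tau\,\de\sigma-a\sigma\,\de\tau=\sum_i (b\alpha_i-a\beta_i)\,x^{\alpha+\beta-u_i}\,\de x_i,
\]
and $\sum_i(b\alpha_i-a\beta_i)=b\,ae-a\,be=0$ confirms that this lies in $V_\gamma$ with $\gamma=\alpha+\beta$. Since monomials span $H^0(\mathcal{O}(ae))$ and $H^0(\mathcal{O}(be))$ and the map is bilinear, its image is the linear span of these monomial outputs; as each output lies in a single graded piece $V_\gamma$, surjectivity becomes equivalent to the assertion that, for every $\gamma$ with $|\gamma|=t$, the outputs landing in $V_\gamma$ already span $V_\gamma$. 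Fixing such a $\gamma$ and substituting $\beta=\gamma-\alpha$, the relevant coefficient vector becomes the affine function $v(\alpha):=(a+b)\alpha-a\gamma$ of the admissible exponents $\alpha$, namely those with $0\le\alpha\le\gamma$ (componentwise) and $|\alpha|=ae$.

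The crux is then to show that the vectors $v(\alpha)$ span the whole hyperplane $V_\gamma=\{\sum_i c_i=0\}$ inside the coordinate subspace indexed by the support of $\gamma$. I would argue through differences: since $v(\alpha)-v(\alpha')=(a+b)(\alpha-\alpha')$, it suffices to realise, for each pair $j\ne k$ in the support of $\gamma$, two admissible exponents differing by $u_j-u_k$, because the vectors $u_j-u_k$ span that hyperplane. This reduces to a feasibility question: one must exhibit $\alpha'$ with $0\le\alpha'\le\gamma$, $|\alpha'|=ae$, $\alpha'_j\le\gamma_j-1$ and $\alpha'_k\ge 1$ (whence $\alpha:=\alpha'+u_j-u_k$ is again admissible). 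Under these box constraints the achievable total weights form the contiguous integer interval $[1,\,(a+b)e-1]$, and since $1\le ae\le(a+b)e-1$ (using only $ae\ge 1$ and $be\ge 1$), such an $\alpha'$ exists. I expect this elementary feasibility-and-contiguity step to be the sole point demanding genuine care; the rest is formal. Assembling the pieces yields $\mathrm{Image}=\bigoplus_\gamma V_\gamma=H^0(\Omega^1_{\P^n}(t))$, which proves surjectivity of the top map, and the final assertion about $\gamma_{a,b}(X,\mathcal{O}_X(e))$ follows at once by chasing the commutative diagram~\eqref{diagram1}.
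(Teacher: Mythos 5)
Your proof is correct, but it takes a genuinely different route from the paper. The paper's argument is representation-theoretic: it checks that $\gamma_{a,b}(\P^n,\mathcal{O}_{\P^n}(e))$ is non-zero on a single pair of monomials, observes that the map is $\mathrm{GL}(n+1)$-equivariant, and invokes the irreducibility of $H^0(\P^n,\Omega^1_{\P^n}(t))$ as a $\mathrm{GL}(n+1)$-representation (the Schur module for the Young diagram $(t-1,1)$) to conclude that a non-zero equivariant map onto it must be surjective. You instead make the target explicit via the Euler sequence, decompose it into the monomial-graded pieces $V_\gamma$, compute the image of each pure tensor of monomials, and check by an elementary feasibility argument that the coefficient vectors $(a+b)\alpha-a\gamma$ span each trace-zero hyperplane $V_\gamma$. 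The steps all hold up: the identification of $H^0(\Omega^1_{\P^n}(t))$ with $\bigoplus_{|\gamma|=t}V_\gamma$, the formula $\sum_i(b\alpha_i-a\beta_i)x^{\gamma-u_i}\,\de x_i$ with vanishing coefficient sum, the reduction to realising the differences $u_j-u_k$ (which span the hyperplane supported on $\mathrm{supp}(\gamma)$, the case $|\mathrm{supp}(\gamma)|=1$ being vacuous since then $V_\gamma=0$), and the contiguity argument giving $1\le ae\le(a+b)e-1$. What the paper's proof buys is brevity, at the price of quoting the irreducibility of $H^0(\Omega^1_{\P^n}(t))$; what yours buys is a self-contained and fully explicit argument that exhibits spanning sets of each graded piece of the image, and that visibly works also for $a=b$ (it does use that $a+b$ is invertible in the base field, harmless here). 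The final deduction for $\gamma_{a,b}(X,\mathcal{O}_X(e))$ from diagram (\ref{diagram1}) is the same in both.
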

\begin{proof}
 Let us call for the moment $\gamma:=\gamma_{a,b}(\P^n,\mathcal{O}_{\P^n}(e))$.  First we show that the map $\gamma$ is not identically zero.
 Fix homogeneous coordinates $X_0,\ldots,X_n$ for $\P^n$ and consider, for instance, the mononomials 
 $X_0^{ae}$ and $X_1^{be}$. Then
 \[ 
  \gamma(X_0^{ae}\otimes X_1^{be})=abeX_0^{ae}X_1^{be}\left(\frac{\de X_0}{X_0}-\frac{\de X_1}{X_1}\right)
 \]
 which is a non trivial function.
 Notice also that $\gamma$ is a $\mathrm{GL}(n+1)$-equivariant map. For every $t\geq2$ the space $H^0(\P^n,\Omega_{\P^n}^1(t))$ corresponds to
 the kernel of the multiplication map 
 \[
 H^0(\P^n,\mathcal{O}_{\P^n}(1))\otimes H^0(\P^n,\mathcal{O}_{\P^n}(t-1))\to H^0(\P^n,\mathcal{O}_{\P^n}(t)),
 \]
 and hence $\Omega^1_{\P^n}(t)$ is a stable homogeneous bundle. The space $H^0(\P^n,\Omega^1_{\P^n}(t))$ is thus an irreducible $\mathrm{GL}(n+1)$-representation
 for every integer $t\geq 2$. Indeed it corresponds to the Young diagram $(t-1,1)$. This means that the codomain $H^0(\P^n,\Omega^1_{\P^n}(t))$ has
 no proper $GL(n+1)$-invariant subspaces and hence the map $\gamma$ is surjective, since it is equivariant and non-zero.
 \end{proof}

We study the surjectivity of the right-vertical map $\alpha_{t,X}$. For every $t>0$ it factors via two maps:
\begin{equation}\label{eq:rhobeta}
H^0(\P^n,\Omega^1_{\P^n}(t))\stackrel{\rho_{t,X}}{\longrightarrow} H^0(X,\Omega^1_{\P^n|X}(t))\stackrel{\beta_{t,X}}{\longrightarrow}H^0(X,\Omega^1_X(t))
\end{equation}
 where $\rho_{t,X}$ is the restriction map and $\beta_{t,X}$ comes from the cohomology of the conormal sequence.
Considering indeed the conormal sequence $0\to N^*_X(t)\to \Omega^1_{\P^n|X}(t)\to \Omega^1_X(t)\to 0$, we have that the second one is surjective if $h^1(N^*_X(t))=0$.
When $X$ is a complete intersection we have the following results on the first cohomology of the twisted conormal bundle.
\begin{lemma}\label{lemma2}
 Assume that $X$ is a complete intersection of $n-k$ hypersurfaces of degree $d_1\geq \cdots \geq d_{n-k}\geq 2$.
 Then 
 \[
 h^1(N^*_X(t))=0 \iff k\geq 2 \ \text{ or } \  \begin{cases} k=1,\\ t>2d_1+d_2+\cdots+d_{n-1}-n-1.
                      \end{cases}
 \]
\end{lemma}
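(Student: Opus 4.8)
The plan is to reduce the whole computation to the cohomology of line bundles on $X$ by exploiting the splitting of the conormal bundle of a complete intersection. Since $X$ is cut out in $\P^n$ by $n-k$ hypersurfaces of degrees $d_1\geq\cdots\geq d_{n-k}\geq 2$, it has dimension $\dim X = n-(n-k)=k$, and its conormal bundle splits as $N^*_X\cong\bigoplus_{i=1}^{n-k}\mathcal{O}_X(-d_i)$. Twisting and adding cohomology therefore gives
\[
h^1(N^*_X(t))=\sum_{i=1}^{n-k} h^1(\mathcal{O}_X(t-d_i)),
\]
and the two regimes $k\geq 2$ and $k=1$ must be treated separately.

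For $k\geq 2$ I would invoke the fact that a complete intersection is arithmetically Cohen--Macaulay, so it has no intermediate cohomology: resolving $\mathcal{O}_X$ by the Koszul complex of the defining equations, twisting, and using $H^i(\P^n,\mathcal{O}_{\P^n}(m))=0$ for $0<i<n$, one obtains $h^1(\mathcal{O}_X(m))=0$ for every $m\in\Z$ provided $k\geq 2$ (so that $1$ lies strictly between $0$ and $k$). Each summand $h^1(\mathcal{O}_X(t-d_i))$ then vanishes, whence $h^1(N^*_X(t))=0$ with no condition on $t$; this is exactly the ``$k\geq 2$'' branch of the statement.

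For $k=1$ the variety $X$ is a complete intersection curve cut out by $n-1$ hypersurfaces, and the cohomology is no longer automatically zero. Here I would apply Serre duality, $h^1(\mathcal{O}_X(t-d_i))=h^0(\omega_X(d_i-t))$, together with the adjunction formula $\omega_X\cong\mathcal{O}_X(D-n-1)$, where $D:=\sum_{j=1}^{n-1}d_j$. This rewrites
\[
h^1(N^*_X(t))=\sum_{i=1}^{n-1} h^0\bigl(\mathcal{O}_X(D+d_i-n-1-t)\bigr).
\]
Since $X$ is integral and $\mathcal{O}_X(1)$ has positive degree, one has $h^0(\mathcal{O}_X(s))=0$ if and only if $s<0$, so the whole sum vanishes precisely when its largest argument is negative. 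Because $d_1\geq\cdots\geq d_{n-1}$, this largest argument is attained at $i=1$, and the vanishing condition becomes $D+d_1-n-1-t<0$, i.e. $t>2d_1+d_2+\cdots+d_{n-1}-n-1$, as claimed; note that this single inequality yields both implications of the biconditional simultaneously.

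The computation is essentially bookkeeping, and the step requiring genuine care is the $k=1$ analysis. I would need to be precise that the binding summand is the one coming from the largest degree $d_1$, so that the sharp threshold is governed by $2d_1+d_2+\cdots+d_{n-1}$ rather than by twice a smaller $d_i$, and to record the two standard inputs---Serre duality and the adjunction formula for the dualizing sheaf of a complete intersection---in the exact normalization producing the $-n-1$ shift. The equivalence $h^0(\mathcal{O}_X(s))=0\iff s<0$, which is what converts the cohomological condition into the stated numerical bound, relies only on $X$ being integral and nondegenerate, a property guaranteed by the hypothesis $d_i\geq 2$.
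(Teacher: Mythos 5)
Your argument is correct and follows essentially the same route as the paper's proof: split $N^*_X(t)$ as $\bigoplus_i\mathcal{O}_X(t-d_i)$, use the vanishing of intermediate cohomology of complete intersections for $k\geq 2$, and for $k=1$ combine Serre duality with $\omega_X\cong\mathcal{O}_X(\sum_i d_i-n-1)$ to see that the binding term is the one with $d_1$. You simply spell out the details (Koszul resolution, the equivalence $h^0(\mathcal{O}_X(s))=0\iff s<0$) that the paper leaves implicit.
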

\begin{proof}
 Recall that $N^*_X(t)=\bigoplus_{i=1}^{n-k}\mathcal{O}_X(t-d_i)$. If $k\geq 2$ then we have that the first cohomology group $H^1(X,\mathcal{O}_X(c))$ vanishes for all $c\in\Z$ since $X$ is a complete intersection.
 If $k=1$ then $\omega_X=\mathcal{O}_X(\sum_i d_i-n-1)$ and hence $N^*_X(t)$ does not have special terms if and only if $t>2d_1+d_2+\cdots+d_{n-1}-n-1$.
\end{proof}

In this situation we can prove the surjectivity of the weighted Gaussian maps in some ranges. In order to do so, we need one more lemma.
\begin{lemma}\label{lemma3}
 Let $X\subset\P^n$ be a smooth variety. Assume $h^1(\P^n,\mathcal{I}_X(t-1))=0$ and that the homogeneous ideal of $X$ contains no minimal generator of degree $t$.
 Then the map $\rho_{t,X}:H^0(\P^n,\Omega^1_{\P^n}(t))\to H^0(X,\Omega^1_{\P^n|X}(t))$ is surjective.
\end{lemma}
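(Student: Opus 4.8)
The plan is to exploit the Euler sequence on $\P^n$ and its restriction to $X$, reducing the surjectivity of $\rho_{t,X}$ to a statement about the degree-$t$ piece of the homogeneous ideal of $X$. Twisting the dual Euler sequence by $\mathcal{O}_{\P^n}(t)$ gives the short exact sequence
\[
0\to\Omega^1_{\P^n}(t)\to\mathcal{O}_{\P^n}(t-1)^{\oplus(n+1)}\stackrel{\mu}{\longrightarrow}\mathcal{O}_{\P^n}(t)\to 0,
\]
where $\mu(f_0,\dots,f_n)=\sum_i X_if_i$. Since this is a sequence of vector bundles it stays exact after restriction to $X$, producing the analogous sequence with map $\mu'$ and $\Omega^1_{\P^n|X}(t)$ on the left. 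Taking global sections of both sequences and linking them by the restriction maps yields a commutative diagram in which $H^0(\Omega^1_{\P^n}(t))=\ker\mu$ and $H^0(\Omega^1_{\P^n|X}(t))=\ker\mu'$, and in which $\rho_{t,X}$ is induced by the restriction $r_A\colon H^0(\mathcal{O}_{\P^n}(t-1))^{\oplus(n+1)}\to H^0(\mathcal{O}_X(t-1))^{\oplus(n+1)}$.

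First I would record two consequences of the hypotheses. From the ideal-sheaf sequence $0\to\mathcal{I}_X(t-1)\to\mathcal{O}_{\P^n}(t-1)\to\mathcal{O}_X(t-1)\to 0$, the assumption $h^1(\mathcal{I}_X(t-1))=0$ forces the restriction $H^0(\mathcal{O}_{\P^n}(t-1))\to H^0(\mathcal{O}_X(t-1))$ to be surjective; hence $r_A$ is surjective, with kernel $H^0(\mathcal{I}_X(t-1))^{\oplus(n+1)}$. Secondly, I would translate ``no minimal generator of degree $t$'' into the statement that the multiplication map $H^0(\mathcal{O}_{\P^n}(1))\otimes H^0(\mathcal{I}_X(t-1))\to H^0(\mathcal{I}_X(t))$ is surjective, since the minimal generators of the ideal in degree $t$ are measured precisely by the cokernel of this map.

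The argument is then a lifting. Given $v'\in H^0(\Omega^1_{\P^n|X}(t))=\ker\mu'$, regarded inside $H^0(\mathcal{O}_X(t-1))^{\oplus(n+1)}$, surjectivity of $r_A$ lets me choose a preimage $\tilde v\in H^0(\mathcal{O}_{\P^n}(t-1))^{\oplus(n+1)}$. Commutativity of the diagram gives that the restriction of $\mu(\tilde v)$ to $X$ equals $\mu'(v')=0$, so $\mu(\tilde v)$ lies in $H^0(\mathcal{I}_X(t))$. By the second consequence I can then write $\mu(\tilde v)=\sum_i X_iw_i$ with $w_i\in H^0(\mathcal{I}_X(t-1))$, that is $\mu(\tilde v)=\mu(w)$ for $w=(w_0,\dots,w_n)\in\ker r_A$. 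Setting $v:=\tilde v-w$ we get $\mu(v)=0$ and $r_A(v)=v'$, so $v\in\ker\mu=H^0(\Omega^1_{\P^n}(t))$ is the desired preimage of $v'$, proving surjectivity of $\rho_{t,X}$.

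The step I expect to be the crux is the precise identification of the image of $\mu$ on $\ker r_A$ with the subspace $\sum_i X_i\,H^0(\mathcal{I}_X(t-1))\subseteq H^0(\mathcal{I}_X(t))$, combined with reading the ``no minimal generator'' hypothesis as surjectivity of the above multiplication onto all of $H^0(\mathcal{I}_X(t))$; this tacitly uses that the ideal is saturated, so that $H^0(\mathcal{I}_X(d))$ really is its degree-$d$ graded piece. Once these bookkeeping identifications are in place the diagram chase is automatic.
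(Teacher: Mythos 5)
Your proof is correct and is essentially the paper's argument in explicit form: the paper tensors the Euler sequence by $\mathcal{I}_X$ to get $0\to \mathcal{I}_X\otimes\Omega^1_{\P^n}(t)\to \mathcal{I}_X(t-1)\otimes V\to\mathcal{I}_X(t)\to 0$ and deduces $h^1(\mathcal{I}_X\otimes\Omega^1_{\P^n}(t))=0$ from the two hypotheses, which is exactly what your diagram chase between the Euler sequences on $\P^n$ and on $X$ unwinds element by element. Both routes use the hypotheses in the identical way (surjectivity of restriction in degree $t-1$, and surjectivity of $V\otimes H^0(\mathcal{I}_X(t-1))\to H^0(\mathcal{I}_X(t))$ as the translation of ``no minimal generator of degree $t$'', an equivalence the paper also records), so no further comment is needed.
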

 The last hypothesis is equivalent to asking for the surjectivity of the map $$V\otimes H^0(\P^n,\mathcal{I}_{X}(t-1))\longrightarrow H^0(\P^n,\mathcal{I}_{X}(t)),$$ where $V$ is the $(n+1)$-dimensional vector space
 $V:=H^0(\P^n,\mathcal{O}_{\P^n}(1))$.
 \begin{proof}
 From the short exact sequence
 \[
 0\to\mathcal{I}_X\to \mathcal{O}_{\P^n}\to\mathcal{O}_X\to 0
 \]
 tensored by $\Omega^1_{\P^n}(t)$, it follows that the map $\rho_{t,X}$ is surjective if the first cohomology group $H^1( \mathcal{I}_X\otimes \Omega^1_{\P^n}(t))$ vanishes.
 But the latter follows from our assumptions by considering the Euler sequence for $\P^n\simeq\P(V)$ twisted by the ideal sheaf $\mathcal{I}_X(t)$
 \begin{equation}\label{eq:eulseq}
 0\to \mathcal{I}_X\otimes \Omega^1_{\P^n}(t)\to \mathcal{I}_X(t-1)\otimes V \to \mathcal{I}_X(t) \to 0.
 \end{equation}
 and hence the proof is over.
%  From our assumptions it follows that the induced long exact sequence in cohomology becomes
%  \[
%  \cdots \to V\otimes H^0(\P^n,\mathcal{I}_{X}(t-1))\twoheadrightarrow H^0(\P^n,\mathcal{I}_{X}(t)) \to H^1(\mathcal{I}_X\otimes \Omega^1_{\P^n}(t)) \to 0.
%  \]
%  This implies $h^1(\mathcal{I}_X\otimes \Omega^1_{\P^n}(t))=0$ and the proof is over.
 \end{proof}
 \begin{prop}
 Let $X\subset \P^n$ be a smooth complete intersection of $n-k$ hypersurfaces of degree $d_1\geq \cdots \geq d_{n-k}\geq 2$. 
 Then the weighted Gaussian map $\gamma_{a,b}(X,\mathcal{O}_{X}(1))$ is surjective if $a+b\neq d_i$ and
 \[
  k\geq 2 \ \text{ or } \  \begin{cases} k=1,\\ t>2d_1+d_2+\cdots+d_{n-1}-n-1.
                      \end{cases}
 \]
 \end{prop}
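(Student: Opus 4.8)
The plan is to read off the statement from the commutative diagram (\ref{diagram1}) specialised to $e=1$, so that the bottom horizontal arrow is exactly $\gamma_{a,b}(X,\mathcal{O}_X(1))$ and the relevant twist is $t=a+b$. By Lemma \ref{lemma1} the top arrow is surjective and, more to the point, surjectivity of $\gamma_{a,b}(X,\mathcal{O}_X(1))$ is implied by surjectivity of the right-hand vertical map $\alpha_{t,X}$; this is the only thing we must establish. (We keep the running hypothesis $b>a>0$ of this section; the case $a=b$ is analogous, since the image of $\gamma_{a,b}$ is unchanged under swapping the two tensor factors.)

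To analyse $\alpha_{t,X}$ I would use its factorisation (\ref{eq:rhobeta}) as $\beta_{t,X}\circ\rho_{t,X}$ and prove that each factor is surjective; the composite is then surjective as well. Surjectivity of $\beta_{t,X}$ is equivalent to the vanishing $h^1(N^*_X(t))=0$, and this is precisely what Lemma \ref{lemma2} provides under the stated numerical dichotomy: either $k\geq 2$, or $k=1$ together with $t>2d_1+d_2+\cdots+d_{n-1}-n-1$. Thus this half is handled verbatim by the hypotheses of the Proposition.

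It then remains to check the two hypotheses of Lemma \ref{lemma3} for $\rho_{t,X}$. For the vanishing $h^1(\P^n,\mathcal{I}_X(t-1))=0$ I would invoke that a smooth complete intersection is arithmetically Cohen--Macaulay, hence projectively normal: the restriction $H^0(\P^n,\mathcal{O}_{\P^n}(m))\to H^0(X,\mathcal{O}_X(m))$ is surjective for every $m$, and the long exact sequence of $0\to\mathcal{I}_X(m)\to\mathcal{O}_{\P^n}(m)\to\mathcal{O}_X(m)\to 0$ together with $H^1(\mathcal{O}_{\P^n}(m))=0$ then forces $h^1(\mathcal{I}_X(m))=0$ for all $m\geq 0$, in particular for $m=t-1$. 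For the second hypothesis, the homogeneous ideal of the complete intersection $X$ is minimally generated precisely by its $n-k$ defining equations, of degrees $d_1,\dots,d_{n-k}$; hence it admits a minimal generator of degree $t$ if and only if $t=d_i$ for some $i$. Since $t=a+b$, the assumption $a+b\neq d_i$ is exactly the statement that there is no minimal generator in degree $t$, so Lemma \ref{lemma3} applies and $\rho_{t,X}$ is surjective.

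Combining the last two paragraphs, $\alpha_{t,X}=\beta_{t,X}\circ\rho_{t,X}$ is surjective, and Lemma \ref{lemma1} delivers the surjectivity of $\gamma_{a,b}(X,\mathcal{O}_X(1))$. I do not expect a genuine obstacle here: the only input beyond the three preceding lemmas is the projective normality of complete intersections, which is classical. The real work is bookkeeping, namely matching the numerical hypotheses of Lemmas \ref{lemma2} and \ref{lemma3} to the displayed conditions $a+b\neq d_i$ and $t>2d_1+\cdots+d_{n-1}-n-1$ once one sets $e=1$ and $t=a+b$.
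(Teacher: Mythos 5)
Your proposal is correct and follows essentially the same route as the paper: Lemma \ref{lemma1} plus the factorization of $\alpha_{t,X}$, Lemma \ref{lemma2} for $\beta_{t,X}$, and projective normality together with the degree condition $t=a+b\neq d_i$ for $\rho_{t,X}$. The only (immaterial) divergence is that you apply Lemma \ref{lemma3} directly, using that the minimal generators of the homogeneous ideal sit exactly in degrees $d_1,\dots,d_{n-k}$, whereas the paper re-enters the proof of Lemma \ref{lemma3} and obtains the same condition from the presentation $\bigoplus_i\mathcal{O}_{\P^n}(-d_i)\twoheadrightarrow\mathcal{I}_X$ together with the vanishing $h^1(\Omega^1_{\P^n}(t-d_i))=0$ for $t\neq d_i$.
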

 \begin{proof}
 Defining $t=a+b$ we know by Lemma \ref{lemma1} that $\gamma_{a,b}(X,\mathcal{O}_{X}(1))$ is surjective if the map $\alpha_{t,X}$ is surjective.
 By Lemma \ref{lemma2} we know that the map $\beta_{t,X}$ is surjective if and only if $k\geq 2$ or $k=1$ and $t>2d_1+d_2+\cdots+d_{n-1}-n-1$. 
 Since $X$ is projectively normal one has $h^1(X,\mathcal{I}_X(t-1))=0$. Furthermore, we have just seen in the previous proof that the map $\rho_{t,X}$ is surjective
 if $h^1(\mathcal{I}_X\otimes \Omega^1_{\P^n}(t))=0$. 
 In this situation the ideal sheaf $\mathcal{I}_X$ is a
 quotient of $\bigoplus_i \mathcal{O}_{\P^n}(-d_i)$ because it is minimally generated by the $n-k$ forms of degree $d_1,\ldots,d_{n-k}$ defining $X$. 
 The map $\alpha_{t,X}$ is thus surjective if $h^1(\Omega_{\P^n}^1(t-d_i))=0$ for all $i$, that is $t\neq d_i$ for all $i$.
 \end{proof}

 Let us move to the general situation where $C$ is a smooth curve and $L$ a very ample line bundle on it.
 \begin{teo}\label{teo}
 Let $C$ be a smooth curve and $L\in\mathrm{Pic}(C)$ be a very ample line bundle on $C$. Denote by $X\subset \P^n$ the linearly normal embedding of $C$ induced by $|L|$.
 Let $t_0\in\N$ be an integer such that $t_0\geq 3$. 
 Assume that \begin{enumerate}
              \item $h^1(\P^n,\mathcal{I}_X(t-1))=0$ for every $t\geq t_0$,
	      \item the map $H^0(\P^n,\mathcal{O}_{\P^n}(1))\times H^0(\P^n,\mathcal{I}_{X}(t-1))\to H^0(\P^n,\mathcal{I}_{X}(t))$ is surjective for every $t\geq t_0$,
	      \item $h^1(\mathcal{O}_X(1))=0$. 
             \end{enumerate}
 Then, for every $t\geq t_0$ and for every $a,b>0$ such that $a+b=t$, the weighted Gaussian map $\gamma_{a,b}(X,\mathcal{O}_X(1))$ is surjective.
 \end{teo}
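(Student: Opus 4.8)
The plan is to reduce the statement to a cohomological vanishing on the curve $X$ via the commutative diagram \eqref{diagram1} specialized to $e=1$. By Lemma \ref{lemma1} the top horizontal map $\gamma_{a,b}(\P^n,\mathcal{O}_{\P^n}(1))$ is surjective (the equivariance argument given there applies equally when $a=b$), so a diagram chase shows that $\gamma_{a,b}(X,\mathcal{O}_X(1))$ is surjective as soon as the right-hand vertical map $\alpha_{t,X}$ is surjective, where $t=a+b\ge t_0$. Following the factorization \eqref{eq:rhobeta} I would write $\alpha_{t,X}=\beta_{t,X}\circ\rho_{t,X}$ and prove each factor surjective. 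For $\rho_{t,X}$ this is immediate: hypotheses (1) and (2) are exactly the two assumptions of Lemma \ref{lemma3}, recalling that the surjectivity of the multiplication map in (2) is equivalent to the absence of a minimal generator of the homogeneous ideal in degree $t$; hence $\rho_{t,X}$ is surjective for every $t\ge t_0$.

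The heart of the argument is the surjectivity of $\beta_{t,X}$, which arises from the conormal sequence $0\to N^*_X(t)\to\Omega^1_{\P^n|X}(t)\to\Omega^1_X(t)\to 0$ and therefore holds once $h^1(N^*_X(t))=0$. To secure this vanishing I would exploit hypothesis (2) in its global form: since the multiplication map is surjective for every $t\ge t_0$, the ideal of $X$ has no minimal generator in any degree $\ge t_0$, so it is generated by forms of degrees $d_1,\dots,d_m$ with $2\le d_j\le t_0-1$, the lower bound because $X$ is linearly normal and non-degenerate. These generators induce a surjection $\bigoplus_{j=1}^m\mathcal{O}_X(-d_j)\twoheadrightarrow N^*_X=\mathcal{I}_X/\mathcal{I}_X^2$; twisting by $\mathcal{O}_X(t)$ and taking cohomology yields a surjection $\bigoplus_j H^1(\mathcal{O}_X(t-d_j))\twoheadrightarrow H^1(N^*_X(t))$, the obstructing $H^2$ term vanishing because $X$ is a curve. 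For $t\ge t_0$ and $d_j\le t_0-1$ one has $t-d_j\ge 1$, so it only remains to check that $h^1(\mathcal{O}_X(s))=0$ for every $s\ge 1$. This propagates from hypothesis (3): writing $\mathcal{O}_X(1)=L$ and using Serre duality, $h^1(L^{s})=h^0(K_X\otimes L^{-s})$, and multiplication by the $(s-1)$-th power of a nonzero section of the very ample bundle $L$ embeds $H^0(K_X\otimes L^{-s})$ into $H^0(K_X\otimes L^{-1})=0$. Thus $h^1(N^*_X(t))=0$ and $\beta_{t,X}$ is surjective.

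Combining the two factors gives the surjectivity of $\alpha_{t,X}$, and hence of $\gamma_{a,b}(X,\mathcal{O}_X(1))$, for every $t\ge t_0$ and every $a,b>0$ with $a+b=t$. The main obstacle is precisely the conormal vanishing $h^1(N^*_X(t))=0$: everything hinges on bounding the generator degrees by $t_0-1$ so that the twists $t-d_j$ remain $\ge 1$, which is where the assumption $t_0\ge 3$ enters, guaranteeing that the degree range $2\le d_j\le t_0-1$ is nonempty and that $t-d_j$ stays positive. The propagation of hypothesis (3) from $s=1$ to all $s\ge 1$ through the very ampleness of $L$ is the small but essential point that renders the higher twists harmless.
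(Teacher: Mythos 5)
Your proof is correct and follows essentially the same route as the paper: reduce via diagram \eqref{diagram1} and Lemma \ref{lemma1} to the surjectivity of $\alpha_{t,X}=\beta_{t,X}\circ\rho_{t,X}$, handle $\rho_{t,X}$ with Lemma \ref{lemma3}, and reduce $\beta_{t,X}$ to the vanishing $h^1(N^*_X(t))=0$ using the fact that hypothesis (2) forces the homogeneous ideal to be generated in degrees $\le t_0-1$. The only (harmless) difference is in the last step: the paper writes $N^*_X(t)$ as a quotient of $\mathcal{O}_X(1)^{\oplus N}$ so that hypothesis (3) applies directly, whereas you resolve it by $\bigoplus_j\mathcal{O}_X(t-d_j)$ and therefore need the extra observation --- which you correctly supply via Serre duality and multiplication by a section of $\mathcal{O}_X(s-1)$ --- that $h^1(\mathcal{O}_X(1))=0$ implies $h^1(\mathcal{O}_X(s))=0$ for all $s\ge 1$.
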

 \begin{proof}
 Notice that hypothesis $(2)$ implies that the homogeneous ideal of $X$ is generated by forms of degree $\leq t_0-1$ and hence  $X$ is scheme-theoretically cut out by forms of degree $t_0-1$.
 By Lemma \ref{lemma1} and the factorization of $\alpha_{t,X}$ given in (\ref{eq:rhobeta}), it is enough to prove that $\rho_{t,X}$ and $\beta_{t,X}$ are surjective for all $t\geq t_0$.
 The first map is surjective by Lemma \ref{lemma3}. 
 Recall that in order to prove the surjectivity of the second map it is enough to show that $h^1(N^*_X(t))=0$ for every $t\geq t_0$. Since $X$ is scheme-theoretically cut out by forms of degree $t_0-1$,
 the ideal $\mathcal{I}_X(t_0-1)$ is spanned by its global section.
 Therefore its quotient
 $N^*_X(t_0-1)=\mathcal{I}_X/\mathcal{I}_X^2(t_0-1)$ is spanned by global sections. Hence
 for some $N\in\N$ we have an exact sequence of $\mathcal{O}_X$-sheaves
 \[
 0\to E \to \mathcal{O}_X(1)^{\oplus N}\to N^*_X(t_0) \to 0.
 \]
 Since $X$ is a curve, we have $h^2(X,E)=0$. Furthermore, from $h^1(\mathcal{O}_X(1))=0$ we obtain $h^1(N^*_X(t_0))=0$ and the same argument works for $t\geq t_0$.
 \end{proof}
 From this we can deduce some special cases as the following ones.

 \begin{cor}\label{cor1}
 Let $C$ be a smooth curve of genus $g$ and $L\in\mathrm{Pic}(C)$ a line bundle of degree $\deg(L)\geq 2g+2$. Denote by $X\subset \P^n$ the linearly normal embedding of $C$ induced by $|L|$.
 Then the map $\gamma_{a,b}(X,\mathcal{O}_X(1))$ is surjective for all positive integers $a,b$ such that $a+b\geq 3$.
 \end{cor}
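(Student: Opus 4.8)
The plan is to apply Theorem~\ref{teo} with $t_0=3$. Since $\deg(L)\ge 2g+2\ge 2g+1$, the line bundle $L$ is very ample and induces a linearly normal embedding $X\subset\P^n$, so the hypotheses of Theorem~\ref{teo} make sense; it then suffices to verify its three numerical assumptions for every $t\ge 3$, whereupon the theorem gives the surjectivity of $\gamma_{a,b}(X,\mathcal{O}_X(1))$ whenever $a+b=t\ge 3$, which is exactly the claim.

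Hypothesis $(3)$ is immediate: by Serre duality $h^1(\mathcal{O}_X(1))=h^1(C,L)=h^0(C,K_C-L)$, and since $\deg(K_C-L)=2g-2-\deg(L)\le -4<0$ this vanishes. For hypothesis $(1)$ I would use the ideal sheaf sequence $0\to\mathcal{I}_X(s)\to\mathcal{O}_{\P^n}(s)\to\mathcal{O}_X(s)\to 0$ with $s=t-1\ge 2$: because $h^1(\P^n,\mathcal{O}_{\P^n}(s))=0$, the vanishing $h^1(\mathcal{I}_X(s))=0$ is equivalent to the surjectivity of the restriction $H^0(\mathcal{O}_{\P^n}(s))\to H^0(L^{\otimes s})$, i.e.\ to $s$-normality of the embedding. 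This holds because a line bundle of degree $\ge 2g+1$ on a smooth curve is normally generated, by the classical theorem of Castelnuovo--Mumford; hence the embedding is projectively normal and hypothesis $(1)$ follows for all $s\ge 1$, in particular for every $s\ge 2$.

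The key input is hypothesis $(2)$. As observed after Lemma~\ref{lemma3} and again in the proof of Theorem~\ref{teo}, the surjectivity of $V\otimes H^0(\mathcal{I}_X(t-1))\to H^0(\mathcal{I}_X(t))$ for all $t\ge 3$ is equivalent to the homogeneous ideal of $X$ being generated in degrees $\le 2$; since $X$ is nondegenerate there are no linear forms in $\mathcal{I}_X$, so this amounts to $I_X$ being generated by quadrics. This is precisely property $N_1$, which holds for line bundles of degree $\ge 2g+2$ by Green's theorem on syzygies of curves (the bound $2g+2$, rather than the $2g+1$ needed for normality, is exactly what forces the hypothesis $\deg(L)\ge 2g+2$ here). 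Granting this, all three hypotheses of Theorem~\ref{teo} are satisfied with $t_0=3$, and the theorem yields the surjectivity of $\gamma_{a,b}(X,\mathcal{O}_X(1))$ for every $a,b>0$ with $a+b\ge 3$. The substantive steps are thus the verifications of $(1)$ and $(2)$, which rest on the classical normal-generation and quadric-generation results; the verification of $(2)$ is the real obstacle, in that it is where the stronger degree bound is genuinely used, while everything else is a formal reduction feeding into Theorem~\ref{teo}.
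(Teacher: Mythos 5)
Your proof is correct and follows essentially the same route as the paper: both apply Theorem~\ref{teo} with $t_0=3$, using projective normality of embeddings of degree $\ge 2g+1$ for hypothesis $(1)$, generation of the homogeneous ideal by quadrics in degree $\ge 2g+2$ (Green, Green--Lazarsfeld) for hypothesis $(2)$, and the vanishing $h^1(L)=0$ for hypothesis $(3)$. Your write-up is in fact somewhat more explicit than the paper's about why each hypothesis reduces to the classical result invoked.
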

 \begin{proof}
 Since $\deg(L)\geq 2g+2$, one has $h^1(L)=0$ and $L$ is very ample. In particular, $X$ is projectively normal (\cite[p. 140]{ACGH85}) and
 one has $h^1(\mathcal {I}_X(2)) =0$.
Since $\deg(L)\geq 2g+2$, the homogeneous ideal of $X$ is generated by quadrics (\cite{G84} and \cite[p. 302]{GL88}).
Hence, for $t\geq 3$, the map $H^0(\P^n,\mathcal{O}_{\P^n}(1))\times H^0(\P^n,\mathcal{I}_{X}(t-1))\to H^0(\P^n,\mathcal{I}_{X}(t))$ is surjective.
 We can then apply Theorem \ref{teo} with $t_0=3$ and this completes the proof.
 \end{proof}

 \begin{cor}\label{cor2}
 Let $C$ be a smooth curve of genus $g\geq 3$. Assume that $C$ is neither hyperelliptic nor trigonal nor isomorphic to a plane quintic and let $X\subset \P^{g-1}$ be its canonical model. Then
 the map $\gamma_{a,b}(X,\omega_X)$ is surjective for all $a,b>0$ such that $a+b\geq 4$.
 \end{cor}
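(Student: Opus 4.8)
The plan is to run the argument of Theorem~\ref{teo} with $\mathcal{O}_X(1)=\omega_X$, $n=g-1$ and $t=a+b\ge 4$, reducing the surjectivity of $\gamma_{a,b}(X,\omega_X)$ to that of $\alpha_{t,X}=\beta_{t,X}\circ\rho_{t,X}$ via Lemma~\ref{lemma1} and the factorization (\ref{eq:rhobeta}). First I would record the two classical facts that control the canonical curve. By Noether's theorem the non-hyperelliptic curve $X$ is projectively normal, so $h^1(\mathcal{I}_X(m))=0$ for every $m\ge 1$; by Petri's theorem, precisely under the stated hypotheses (not hyperelliptic, not trigonal, not a plane quintic), the homogeneous ideal of $X$ is generated by quadrics. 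The first fact supplies hypothesis~$(1)$ of Lemma~\ref{lemma3} for all $t\ge 2$, while the second guarantees that there is no minimal generator of degree $\ge 3$, so the multiplication map $V\otimes H^0(\mathcal{I}_X(t-1))\to H^0(\mathcal{I}_X(t))$ is surjective for every $t\ge 3$. Hence Lemma~\ref{lemma3} gives the surjectivity of $\rho_{t,X}$ for all $t\ge 4$.

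It remains to prove that $\beta_{t,X}$ is surjective for $t\ge 4$, which by the conormal sequence reduces to the vanishing $h^1(N^*_X(t))=0$. This is the single point where Theorem~\ref{teo} cannot be invoked verbatim: its hypothesis~$(3)$ requires $h^1(\mathcal{O}_X(1))=0$, whereas $h^1(\omega_X)=h^0(\mathcal{O}_X)=1\ne 0$. This is exactly what raises the threshold from $a+b\ge 3$ in Corollary~\ref{cor1} to $a+b\ge 4$ here, and the remedy is the extra twist made available by generation in degree $2$. Since the ideal is generated by quadrics, $\mathcal{I}_X(2)$ is globally generated, hence so is its quotient $N^*_X(2)=(\mathcal{I}_X/\mathcal{I}_X^2)(2)$. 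Choosing a surjection $\mathcal{O}_X^{\oplus N}\twoheadrightarrow N^*_X(2)$ and twisting by $\mathcal{O}_X(t-2)=\omega_X^{\otimes(t-2)}$ produces an exact sequence $0\to E\to (\omega_X^{\otimes(t-2)})^{\oplus N}\to N^*_X(t)\to 0$ of vector bundles on the curve. As $\dim X=1$ we have $H^2(E)=0$, so $H^1(N^*_X(t))$ is a quotient of $H^1(\omega_X^{\otimes(t-2)})^{\oplus N}$; and for $t\ge 4$ one has $t-2\ge 2$, whence $h^1(\omega_X^{\otimes(t-2)})=h^0(\omega_X^{\otimes(3-t)})=0$ by Serre duality. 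Therefore $h^1(N^*_X(t))=0$ and $\beta_{t,X}$ is surjective for every $t\ge 4$.

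Combining the two halves, $\alpha_{t,X}$ is surjective for all $t\ge 4$, and Lemma~\ref{lemma1} yields the surjectivity of $\gamma_{a,b}(X,\omega_X)$ whenever $a+b=t\ge 4$. The main obstacle is precisely the failure of hypothesis~$(3)$ of Theorem~\ref{teo} for the canonical bundle; everything hinges on upgrading ``$X$ is scheme-theoretically cut out by forms of degree $t_0-1$'' to ``$X$ is cut out by quadrics'', which is the content of Petri's theorem and is where the excluded curves enter. Once quadric generation is secured, the twist $\omega_X^{\otimes 2}$ --- which, unlike $\omega_X$, has vanishing first cohomology --- carries the vanishing through, and no further input is required.
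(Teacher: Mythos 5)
Your proposal is correct and follows essentially the same route as the paper: Noether's and Petri's theorems give projective normality and quadric generation, Lemma~\ref{lemma3} handles $\rho_{t,X}$, and the failure of hypothesis $(3)$ of Theorem~\ref{teo} is repaired by presenting $N^*_X(t)$ as a quotient of copies of a power $\omega_X^{\otimes k}$ with $k\ge 2$, whose $h^1$ vanishes. The only (immaterial) difference is where you place the twist — the paper writes the globally generated bundle as $N^*_X(a+b-2)$ and twists by $\mathcal{O}_X(2)$, while you start from $N^*_X(2)$ and twist by $\omega_X^{\otimes(t-2)}$.
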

 \begin{proof}
 Every canonically embedded smooth curve is projectively normal by a theorem of Max Noether \cite[p.117]{ACGH85} and hence $h^1(\mathcal {I}_X(2)) =0$. 
 Since $C$ is neither trigonal nor a plane quintic, a theorem of K. Petri says that
 the homogeneous ideal of $X$ is generated by quadrics \cite[p. 131]{ACGH85}. 
 Hence condition $(2)$ of Theorem \ref{teo} is satisfied for $t_0=3$. The same argument used in the proof of
 Theorem \ref{teo} shows that
 $N^*_X(a+b-2)$ is spanned by global section and we have thus an analogous exact sequence
  \[
 0\to E \to \mathcal{O}_X(2)^{\oplus N}\to N^*_X(a+b) \to 0.
 \]
 Since
 $h^1(\mathcal{O}_X(2))=h^1(\omega_X^{\otimes 2})=0$ we get $h^1(N^*_X(a+b))=0$ and hence we conclude that the map $\beta_{a+b,X}$ is surjective.
 Since the homogeneous ideal of $X$ is generated by forms of degree $\leq a+b-1$ (actually, of degree $\leq a+b-2$), from Lemma \ref{lemma3} we deduce that the map $\rho_{a+b,X}$ is surjective.
Hence $\gamma_{a,b}(X,\omega_X)$ is surjective.
 \end{proof}

\section{Gaussian maps and $h$-theta-characteristics}

 In this section we identify some objects in the kernel of the Gaussian maps, in particular the ones of the kind $v^{\otimes a}\otimes v^{\otimes b}$ for $v\in H^0(L)$. We
 get a lower bound on the dimension of the kernel, which in general is far from being sharp, but that it is achieved in the case of interest for us.
 Consider the multiplication map
 \[
 \eta_{a+b}: \mathrm{Sym}^{a+b}(H^0(L))\to H^0(L^{\otimes a+b}).
 \]
 \begin{lemma}\label{teo3}
  For every $a,b>0$, the kernel of $\gamma_{a,b}(X,L)$ has dimension $\geq\rank(\eta_{a+b})$.
 \end{lemma}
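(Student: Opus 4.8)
The plan is to exhibit an explicit subspace of $\ker\gamma_{a,b}(X,L)$ of the required dimension, built from the decomposable elements flagged at the start of the section. For $v\in H^0(L)$ write $v^a\in H^0(L^{\otimes a})$ for the $a$-th power section (the image of $v^{\otimes a}$ under multiplication), and consider $v^a\otimes v^b\in H^0(L^{\otimes a})\otimes H^0(L^{\otimes b})$. A direct computation with the Leibniz rule $\de(v^a)=av^{a-1}\de v$ gives
\[
\gamma_{a,b}(X,L)(v^a\otimes v^b)=b\,v^b\,\de(v^a)-a\,v^a\,\de(v^b)=ab\,v^{a+b-1}\de v-ab\,v^{a+b-1}\de v=0,
\]
so each such element lies in the kernel. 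Let $K'\subseteq\ker\gamma_{a,b}(X,L)$ denote the linear span of $\{v^a\otimes v^b:v\in H^0(L)\}$; it then suffices to prove the sharper bound $\dim K'\geq\rank(\eta_{a+b})$.

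Next I would exploit the commutative square relating the Gaussian map to the multiplication maps. Introduce the multiplication $\mu_{a,b}:H^0(L^{\otimes a})\otimes H^0(L^{\otimes b})\to H^0(L^{\otimes(a+b)})$, which sends the generator $v^a\otimes v^b$ to the product $v^{a+b}$. Since $\mu_{a,b}$ is linear, its restriction to $K'$ surjects onto $\mathrm{span}\{v^{a+b}:v\in H^0(L)\}$, whence $\dim K'\geq\dim\mathrm{span}\{v^{a+b}:v\in H^0(L)\}$. The remaining task is to identify this last span with $\mathrm{Im}(\eta_{a+b})$, whose dimension is by definition $\rank(\eta_{a+b})$.

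For that identification I would invoke the classical fact that, in characteristic zero, the $(a+b)$-th powers $v^{\cdot(a+b)}$ of elements $v\in H^0(L)$ span the whole symmetric power $\mathrm{Sym}^{a+b}(H^0(L))$ — equivalently, the $(a+b)$-th Veronese image is not contained in any hyperplane. Because $\eta_{a+b}(v^{\cdot(a+b)})=v^{a+b}$, applying the linear map $\eta_{a+b}$ to this spanning set yields $\mathrm{Im}(\eta_{a+b})=\mathrm{span}\{v^{a+b}:v\in H^0(L)\}$. Chaining the two inequalities then gives $\dim\ker\gamma_{a,b}(X,L)\geq\dim K'\geq\rank(\eta_{a+b})$, completing the argument.

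The vanishing computation is entirely routine, and the passage from $K'$ to $\mathrm{Im}(\eta_{a+b})$ is elementary linear algebra once the square commutes. The only genuinely nontrivial input is the spanning statement for powers of sections, which is precisely where working over $\C$ (characteristic zero) is essential; I expect this to be the step one must state carefully, though it is standard.
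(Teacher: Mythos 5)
Your proposal is correct and follows essentially the same route as the paper: exhibit the elements $v^a\otimes v^b$ in the kernel, push them forward under the multiplication map to the span of the powers $v^{a+b}$, and identify that span with $\mathrm{Im}(\eta_{a+b})$ via the characteristic-zero fact that powers of linear forms span the symmetric power. The only (harmless) difference is that you verify the vanishing by a direct Leibniz computation, whereas the paper quotes the formula $\gamma_{a,b}(\sigma\otimes\tau)=\frac{\tau^{a+1}}{\sigma^{b-1}}\de\bigl(\frac{\sigma^{b}}{\tau^{a}}\bigr)$ from \cite[Lemma 1]{BF06}.
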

 \begin{proof}
 Let $v$ be an element in $H^0(L)$. Define $\sigma:=v^{\otimes a}\in H^0(L^{\otimes a})$ and $\tau:=\sigma^{\otimes b}\in H^0(L^{\otimes b})$. We have
 \[
 \gamma_{a,b}(X,L)(\sigma\otimes\tau)=\frac{\tau^{a+1}}{\sigma^{b-1}} d\left( \frac{\sigma^{b}}{\tau^a} \right)=0
 \]
 where the first equality follows from the proof of \cite[Lemma 1]{BF06} and the second follows since $\sigma^{b}=\tau^a$ by definition. This means that the kernel of $\gamma_{a,b}(X,L)$ contains the linear span of all elements of the kind $ v^a\otimes v^b$ for $v \in H^0(L)$.
 In order to prove $\dim(\ker\gamma_{a,b}(X,L))\geq \rank(\eta_{a+b})$ it is enough to show that, given the multiplication map
 $
 \nu_{a,b}:H^0(L^{\otimes a})\otimes H^0(L^{\otimes b})\to H^0(L^{\otimes a}\otimes L^{\otimes b}),
 $
one has
 \[
 \dim(\nu_{a,b}(\ker\gamma_{a,b}(X,L)))\geq \rank(\eta_{a+b}).
 \]
 We have just shown that $\ker\gamma_{a,b}(X,L)$ contains all elements of the kind $v^{\otimes a+b}$ for $v\in H^0(L)$.
 Since we work over algebraically closed base field with characteristic zero, 
 any symmetric polynomial of degree $a+b$ is a linear combination of $(a+b)$-th powers of linear forms, that is
 $$\mathrm{Im}(\eta_{a+b})\subset\mathrm{lin\_span}\{v^{\otimes a+b}\in H^0(L^{\otimes a+b}) \ \mid \ v\in H^0(L)\}\subset \nu_{a,b}(\ker\gamma_{a,b}(X,L)).$$
 The thesis follows.
 \end{proof}
 
 Notice that, when $a=1$, $b=h-1$ and $h$ is even, we are in the situation of Theorem 3 of \cite{Fon02} and the map $\gamma_{a,b}(C,L)$ corresponds to the map
 \[\begin{array}{cccc}
 \mu_h:=\gamma_{1,h-1}(C,L): & H^0(C,L)\otimes H^0(C,K_C-L) & \to & H^0(C,2K_C)\\
                              & \sigma \otimes \tau & \mapsto & (h-1)\tau\de\sigma-\sigma\de\tau
 \end{array} \]
  Notice the misprint in the statement of \cite[Theorem 3]{Fon02} and \cite[Theorem 1]{BF06}.
 Since in Theorem 3 of \cite{Fon02} only the case $h$ even is considered, we restate here the theorem in its full generality.
 \begin{teo}\label{teofon}
  Let $\mathfrak{Th}^r_{g,h}$ be the space defined in (\ref{deflocus}) for $g,h\geq 2$ and define the map
  \[\begin{array}{cccc}
   \mu_h:= & H^0(C,L)\otimes H^0(C,K_C-L) & \to & H^0(C,2K_C)\\
                              & \sigma \otimes \tau & \mapsto & (h-1)\tau\de\sigma-\sigma\de\tau.
 \end{array}
  \]
  Then $T_C\left(\mathfrak{Th}^r_{g,h}\right)=\left(\mathrm{coker}\mu_h\right)^*$.
 \end{teo}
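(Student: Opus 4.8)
The plan is to realize $T_C(\mathfrak{Th}^r_{g,h})$ as the space of first-order deformations of $C$ preserving the defining condition, and then to match that space with the annihilator of $\mathrm{Im}(\mu_h)$ inside $H^1(C,T_C)=H^0(C,2K_C)^\vee$ under Serre duality. Note first that this makes sense: since $\mathrm{coker}(\mu_h)$ is a quotient of $H^0(C,2K_C)$, its dual is a subspace of $T_{[C]}\mathcal M_g=H^1(C,T_C)=H^0(C,2K_C)^\vee$. Fix $[C]\in\mathfrak{Th}^r_{g,h}$ with an $h$-th root $L$ satisfying $h^0(L)=r+1$. Because the moduli of $h$-spin curves is a finite étale cover of $\mathcal M_g$, every $\xi\in H^1(C,T_C)$ lifts uniquely to a first-order deformation of the pair $(C,L)$ keeping $hL=K_C$. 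Hence a $\C[\epsilon]$-point $\xi$ lies in $\mathfrak{Th}^r_{g,h}$ exactly when $L_\epsilon$ carries at least $r+1$ independent sections over $\C[\epsilon]$; since $h^0(L)=r+1$, this forces all of $H^0(C,L)$ to lift, i.e.\ the connecting map $\partial_\xi\colon H^0(C,L)\to H^1(C,L)$ (cup product with the Kodaira--Spencer class of the deformed root) to vanish identically. Both inclusions of $T_C(\mathfrak{Th}^r_{g,h})=\{\xi:\partial_\xi=0\}$ are clear this way.

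Next I would make $\partial_\xi$ explicit and dualize it. Since $L=K_C^{1/h}$, a section $s\in H^0(L)$ is a $\tfrac1h$-density, and the deformation of the root acts on it by the Lie derivative along $\xi$: writing $\xi=\{f_{\alpha\beta}\partial_z\}$ in \v{C}ech form, $\partial_\xi(s)$ is represented by the $L$-valued cocycle $\{f_{\alpha\beta}s'+\tfrac1h f'_{\alpha\beta}s\}$. By Serre duality $H^1(C,L)^\vee=H^0(C,K_C-L)=H^0(C,L^{\otimes(h-1)})$, so $\partial_\xi=0$ iff $\langle\partial_\xi(s)\cup t\rangle=0$ in $H^1(C,K_C)=\C$ for all $s\in H^0(L)$ and all $t\in H^0(K_C-L)$. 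Writing $t$ as a $\tfrac{h-1}{h}$-density, this pairing is the trace of the $K_C$-valued cocycle with local coefficient $A:=fs't+\tfrac1h f'st$.

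The main step is to compare $A$ with the coefficient $B:=f\big((h-1)ts'-st'\big)$ of the cocycle computing $\langle\xi\cup\mu_h(s\otimes t)\rangle$, where $\mu_h(s\otimes t)=(h-1)t\,\de s-s\,\de t$ has local coefficient $(h-1)ts'-st'$, contracted against $\xi$. The key input is the degeneration of the Hodge--de Rham spectral sequence at $E_1$ for the smooth projective curve $C$, which says that the map $\de\colon H^1(C,\mathcal O_C)\to H^1(C,\Omega^1_C)$ induced by the de Rham differential vanishes. Applying this to the class $\xi\cup(st)\in H^1(C,\mathcal O_C)$ — represented by the $\mathcal O_C$-cocycle $\{f_{\alpha\beta}\,st\}$, since $st\in H^0(K_C)$ — shows that the $K_C$-valued cocycle $\{(f_{\alpha\beta}st)'\}$ is a coboundary, i.e.\ $f'st+fs't+fst'\equiv 0$ modulo coboundaries. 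Substituting $f'st\equiv-(fs't+fst')$ into $A$ yields $A\equiv\tfrac1h\big((h-1)fs't-fst'\big)=\tfrac1h B$, whence $\langle\partial_\xi(s)\cup t\rangle=\tfrac1h\langle\xi\cup\mu_h(s\otimes t)\rangle$.

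Finally I would conclude via the chain of equivalences: $\xi$ tangent $\iff\partial_\xi=0\iff\langle\xi\cup\mu_h(s\otimes t)\rangle=0$ for all $s,t\iff\xi\perp\mathrm{Im}(\mu_h)$, which identifies $T_C(\mathfrak{Th}^r_{g,h})$ with $\mathrm{Im}(\mu_h)^\perp=(\mathrm{coker}\,\mu_h)^\vee$. The computation is uniform in $h$, so it settles the odd case identically to the even one. I expect the delicate points to be the precise identification of the Kodaira--Spencer action on the $h$-th root as the $\tfrac1h$-density Lie derivative, and the bookkeeping in the coboundary comparison; the conceptual heart is the vanishing of $\de$ on $H^1(\mathcal O_C)$, which is exactly what kills the ``symmetric'' part of $A$ and isolates the weighted Gaussian combination $B$.
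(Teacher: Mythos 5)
Your proposal is correct and takes essentially the same route as the paper, whose proof of Theorem \ref{teofon} simply invokes the argument of \cite[Theorem 3]{Fon02} (itself following Nagaraj) and notes that the only change for general $h$ is replacing $\tfrac{1}{2m}$ by $\tfrac{1}{h}$ --- which is precisely the $\tfrac{1}{h}f'_{\alpha\beta}s$ term in your Lie-derivative formula for $\tfrac1h$-densities, the rest of your computation (unique lifting of the root via the \'etale spin cover, the coboundary relation $f'st+fs't+fst'\equiv 0$ coming from the vanishing of $\de$ on $H^1(\mathcal O_C)$, and the resulting identity $A\equiv\tfrac1h B$) being exactly the content of \cite[Lemma 3 and \S 3.3]{Fon02}. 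The only point you pass over quickly, namely the identification of the Zariski tangent space of the degeneracy locus with $\{\xi:\partial_\xi=0\}$ at a point where $h^0(L)=r+1$ exactly, is treated at the same level of detail in the cited source, so there is no gap relative to the paper.
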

 \begin{proof}
 The proof of \cite[Theorem 3]{Fon02} works verbatim when $h\ge 3$ is an odd integer. Indeed, just use $\frac{1}{h}$ instead of $\frac{1}{2m}$ in 
 the proof of \cite[Lemma 3]{Fon02} and use $h$ instead of $2m$ in \cite[\S 3.3]{Fon02}.
 \end{proof}
 
 In particular, when $C$ is a smooth and connected projective curve, Proposition 1 of \cite{BF06} gives us the bound
 \begin{equation}\label{boundleq}
 \rank(\gamma_{a,b}(C,L))\geq h^0(L^{\otimes a})+h^0(L^{\otimes b})-3,
 \end{equation}
 where the inequality is strict if $bs-at\neq 0$, with $s=h^0(L^{\otimes a})-1$ and $t=h^0(L^{\otimes b})-1$.
 Recalling the definition (\ref{deflocus}) of $\mathfrak{Th}^r_{g,h}$ given in the Introduction, 
 Lemma \ref{teo3} provides a bound on the dimension of the Zariski tangent space $\mbox{Zar}_{[C]}\mathfrak{Th}^r_{g,h}$ of $ \mathfrak{Th}^r_{g,h}$ at $[C] \in \mathfrak{Th}^r_{g,h}$ inside
 the Zariski tangent space $H^0(C,K_C^{\otimes 2})^\vee$ of the deformation space of $C$. In particular, one has
 \begin{equation}\label{eq:conj}
 \rank(\mu_h)=\mathrm{codim}\mbox{Zar}_{(C,L)}\mathfrak{Th}^r_{g,h} \leq (r+1)\left(r+1 +(g-1)\frac{h-2}{h}\right)-\rank(\eta_{h}).
 \end{equation}
 In his work G. Farkas studied the classical locus $\mathcal{S}^r_g:=\{[C,L]\in\mathcal{S}_g \mid h^0(L)\geq r+1,h^0(L)\equiv_2 r+1\}$ cor\-responding to the choice $h=2$ and
 he showed that, for small values of $r$, there is a specific integer $g(r)$ such that for every $g\geq g(r)$ there exists a component of $\mathcal{S}^r_g$ realizing the bound.
 He also made a conjecture
 (see \cite[Conjecture 3.4]{Far05}), recently proved by \cite{Be13},
 on the existence of such a component for every $r\geq 3$ and for every $g\geq {r+2 \choose 2}$.
 
 When $h$ is general and $r=0$, we know the exact dimension of the tangent space of  $\mathfrak{Th}^0_{g,h}$ at any $[C]$ with $h^0(L)=1$, by seeing it as a subspace of the
 tangent space $H^0(K_C^{\otimes 2})^\vee$ of the local deformation space of $C$.
 \begin{teo}\label{teo:sect3}
  For every $g,h\geq 2$ and every $[C] \in \mathfrak{Th}^0_{g,h}$ with an $h$-theta $L$ satisfying $h^0(L)=1$, the Zariski tangent space at $[C]$
  has codimension $(g-1)(h-2)/h$ in the tangent space $H^0(C,K _C^{\otimes 2})^\vee $ of the local deformation space of $C$.
 \end{teo}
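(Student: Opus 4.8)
The plan is to compute the codimension directly from Theorem~\ref{teofon}, which identifies the Zariski tangent space at $[C]$ with the dual of $\mathrm{coker}(\mu_h)$, where $\mu_h = \gamma_{1,h-1}(C,L)$ with $L$ an $h$-th root of $K_C$ satisfying $h^0(L)=1$. The codimension of $\mbox{Zar}_{[C]}\mathfrak{Th}^0_{g,h}$ inside $H^0(C,K_C^{\otimes 2})^\vee$ equals the dimension of $\mathrm{Im}(\mu_h) = \mathrm{rank}(\mu_h)$, so everything reduces to pinning down this rank exactly. First I would write down the target $H^0(C,2K_C)$, which has dimension $3g-3$ by Riemann--Roch on a curve of genus $g$, so the ambient tangent space has dimension $3g-3$ as well. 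It then suffices to show $\mathrm{rank}(\mu_h) = (3g-3) - (g-1)(h-2)/h$.

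Next I would invoke the general lower bound furnished by Lemma~\ref{teo3} together with the inequality~(\ref{boundleq}). Since $r=0$ forces $h^0(L)=1$, the symmetric power $\mathrm{Sym}^h(H^0(L))$ is one-dimensional, spanned by $v^{\otimes h}$ for the unique (up to scalar) section $v \in H^0(L)$, and the multiplication map $\eta_h$ sends this generator to $v^h \in H^0(L^{\otimes h}) = H^0(K_C)$, which is nonzero since $v$ is a nonzero section. Hence $\mathrm{rank}(\eta_h)=1$, and Lemma~\ref{teo3} tells us the kernel of $\mu_h$ contains at least this one-dimensional span of $v^{\otimes 1}\otimes v^{\otimes (h-1)}$. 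Combining this with the domain dimension will give the rank. The domain of $\mu_h$ is $H^0(C,L)\otimes H^0(C,K_C-L)$; here $h^0(L)=1$ and $K_C - L = (h-1)L$, so $h^0(K_C-L) = h^0(L^{\otimes(h-1)})$, and I would compute this via Riemann--Roch as $\deg((h-1)L) - g + 1 + h^1((h-1)L)$. Since $\deg L = (2g-2)/h$ we get $\deg((h-1)L) = (h-1)(2g-2)/h$, and one checks $h^1((h-1)L)=h^0(L)=1$ by Serre duality (as $K_C - (h-1)L = L$), giving $h^0(K_C-L) = (h-1)(2g-2)/h - g + 2$.

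The heart of the argument is then the \textbf{exactness} of the lower bound: I would argue that the kernel of $\mu_h$ is \emph{exactly} the one-dimensional span identified above, i.e.\ that $\mu_h$ has no other syzygies. Equivalently, by~(\ref{boundleq}), the rank equals $h^0(L^{\otimes 1}) + h^0(L^{\otimes(h-1)}) - (\text{kernel dimension})$, and with $h^0(L)=1$ the bound should become an equality. Subtracting: $\mathrm{rank}(\mu_h) = 1 \cdot h^0(K_C-L) - 1 = (h-1)(2g-2)/h - g + 1$. A short simplification rewrites this as $(3g-3) - (g-1)(h-2)/h$, which is precisely the claimed codimension. The main obstacle is justifying that the only kernel element is $v\otimes v^{h-1}$: since $h^0(L)=1$, the domain is $\{v\}\otimes H^0((h-1)L)$ up to scalars, so a kernel element is $v\otimes \tau$ with $\gamma_{1,h-1}(v\otimes\tau) = (h-1)\tau\,\de v - v\,\de\tau = 0$; dividing through (away from zeros) this forces $\de(\tau/v^{h-1})=0$, so $\tau$ is a scalar multiple of $v^{h-1}$, giving exactly the one-dimensional kernel. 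This direct computation, rather than appeal to the strict-inequality clause of~(\ref{boundleq}), is the cleanest route and closes the proof.
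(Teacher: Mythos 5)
Your argument is correct and lands on the right number, but the key step is genuinely different from the paper's. The paper pins down $\rank(\mu_h)$ by squeezing: the upper bound $\rank(\mu_h)\le h^0(K_C-L)-1$ comes from Lemma \ref{teo3} (the kernel contains the line spanned by $v\otimes v^{\otimes(h-1)}$), and the matching lower bound comes from the strict-inequality clause of (\ref{boundleq}), i.e.\ \cite[Proposition 1]{BF06} with $bs-at\ne 0$, which applies when $h>2$; the case $h=2$ is then handled separately, since there $t=h^0(K_C-L)-1=0$ and the strictness criterion fails. You instead compute the kernel outright: since $h^0(L)=1$, every element of the domain is a pure tensor $v\otimes\tau$, and $(h-1)\tau\,\de v-v\,\de\tau=0$ is equivalent to $\de\bigl(\tau/v^{h-1}\bigr)=0$ on the locus where $v\ne 0$, so $\tau$ is proportional to $v^{h-1}$ by connectedness of $C$. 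This is more self-contained (it avoids the strict-inequality statement from \cite{BF06} entirely), it treats $h=2$ and $h>2$ uniformly, and it gives the exact kernel rather than sandwiching the rank; the paper's route, by contrast, requires no pointwise analysis and reuses results already quoted for the general bound (\ref{eq:conj}).

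One bookkeeping slip to correct: having rightly observed that the codimension of the Zariski tangent space equals $\rank(\mu_h)$, the target identity is $\rank(\mu_h)=(g-1)(h-2)/h$, not $\rank(\mu_h)=(3g-3)-(g-1)(h-2)/h$ as you assert twice. Your final computation does yield $\rank(\mu_h)=(h-1)(2g-2)/h-g+1=(g-1)(h-2)/h$, which is the claimed codimension on the nose; the statement that this ``simplifies to $(3g-3)-(g-1)(h-2)/h$'' is false and should simply be deleted. The mathematics underneath is sound.
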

 \begin{proof}
 Let $C$ be a smooth curve and $L\in \mathrm{Pic}(X)$ an $h$-th canonical root such that $h^0(L)=1$. Notice that, when $r=0$, the rank of the map $\eta_s$ is always equal to $1$ for every $s\in \N$.
 From (\ref{boundleq}) we know that
\begin{equation}\label{eqo+}
 \rank(\mu_h)\geq h^0(L)+h^0(K_C-L)-3=h^0(K_C-L)-2
 \end{equation}
 and from Lemma \ref{teo3} we have
 \begin{equation}\label{eqteo}
 \rank(\mu_h)\leq h^0(L)\cdot h^0(K_C-L)-1=h^0(K_C-L)-1
 \end{equation}
 When $g\geq 2$ and $h >2$, the inequality in (\ref{eqo+}) is strict because $s=h^0(L)-1=0$ and $t=h^0(K_C-L)-1=g-\frac{2g-2}{h}\neq 0$ 
 (\cite[Proposition 1]{BF06}). Furthermore, if $h=2$ then
 $h^0(K_C-L)=1$ and thus, from (\ref{eqteo}), we have $\rank(\mu_2)=h^0(K_C-L)-1=0$ even in this case. We can thus conclude that, for all $g,h\geq 2$, the map $\mu _h$
 has always rank $h^0(K_C-L)-1=(g-1)(h-2)/h$.
 \end{proof}

 For any $g,r$ and $h$ we cannot expect a complete intersection $C$ to be an isolated point of $\mathfrak{Th}^r_{g,h}$, since curves of this kind move in families. However, we show that the dimension of a component of $\mathfrak{Th}^r_{g,h}$ 
 containing a complete intersection is the smallest possible one. In other words, we show that if a component of 
 $\mathfrak{Th}^r_{g,h}$ contains a complete intersection curve, then its general element is again a complete intersection.
 Let $C\subset \P^n$ be a smooth complete intersection of hypersurfaces of degree $d_1,\ldots,d_{n-1}$ with
 \[
 d_1\geq \cdots \geq d_{n-1}\geq 2, \qquad n\geq 2.
 \]
 If $n=3$ assume $d_1\geq 3$ and if $n=2$ assume $d_1\geq 4$. Let us call $\xi:=\sum_i d_i-n-1$ and recall that $\omega_C=\mathcal{O}_C(\xi)$. We need a preliminary lemma.

 \begin{lemma}\label{lemma3.1}
  The cokernel of the cotangent map $$\alpha: H^0(C,\Omega_{\P^n|C}^1(\xi))\to H^0(C,\omega^{\otimes 2}_C)$$ has dimension
  $h^0(N_C)-(n+1)^2+1$.
 \end{lemma}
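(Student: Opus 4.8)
The plan is to isolate $\alpha$ from the twisted conormal sequence and reduce the dimension of its cokernel to two Serre-duality identities together with a count of the global sections of $T_{\P^n}$ restricted to $C$. First I would tensor the conormal sequence $0\to N^*_C\to \Omega^1_{\P^n|C}\to \Omega^1_C\to 0$ by $\mathcal{O}_C(\xi)$. Since $\Omega^1_C=\omega_C=\mathcal{O}_C(\xi)$, the quotient becomes $\omega_C^{\otimes 2}$ and we obtain
\[
0\to N^*_C(\xi)\to \Omega^1_{\P^n|C}(\xi)\to \omega_C^{\otimes 2}\to 0,
\]
in which the induced map on global sections is exactly $\alpha$. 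From the associated long exact cohomology sequence, $\mathrm{coker}(\alpha)$ is isomorphic to the kernel of the connecting map $j\colon H^1(N^*_C(\xi))\to H^1(\Omega^1_{\P^n|C}(\xi))$. Because $\deg\omega_C^{\otimes 2}>\deg\omega_C$ (equivalently $h^0(\omega_C^{-1})=0$, as $g\geq 2$) one has $H^1(\omega_C^{\otimes 2})=0$, so $j$ is surjective and
\[
\dim\mathrm{coker}(\alpha)=h^1(N^*_C(\xi))-h^1(\Omega^1_{\P^n|C}(\xi)).
\]

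Next I would evaluate the two terms on the right by Serre duality on $C$, using $\omega_C=\mathcal{O}_C(\xi)$. This gives $h^1(N^*_C(\xi))=h^1(N^*_C\otimes\omega_C)=h^0(N_C)$ and, in the same way, $h^1(\Omega^1_{\P^n|C}(\xi))=h^1(\Omega^1_{\P^n|C}\otimes\omega_C)=h^0(T_{\P^n|C})$. So everything comes down to proving $h^0(T_{\P^n|C})=(n+1)^2-1$. For this I would restrict the Euler sequence to $C$,
\[
0\to \mathcal{O}_C\to \mathcal{O}_C(1)^{\oplus(n+1)}\to T_{\P^n|C}\to 0,
\]
and use that a complete intersection is projectively normal, so that $h^0(\mathcal{O}_C(1))=n+1$ and the image of $H^0(\mathcal{O}_C(1))^{\oplus(n+1)}$ in $H^0(T_{\P^n|C})$ has dimension $(n+1)^2-1$. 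The desired equality is therefore equivalent to the vanishing of the connecting map $H^0(T_{\P^n|C})\to H^1(\mathcal{O}_C)$, that is, to the injectivity of the multiplication-by-coordinates map $H^1(\mathcal{O}_C)\to H^1(\mathcal{O}_C(1))^{\oplus(n+1)}$.

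The last point is where the real content and the numerical hypotheses lie. Dualizing by Serre duality, injectivity of that map is the surjectivity of the multiplication map $H^0(\mathcal{O}_C(1))\otimes H^0(\omega_C(-1))\to H^0(\omega_C)$, i.e. of $(S/I)_1\otimes(S/I)_{\xi-1}\to(S/I)_\xi$ in the homogeneous coordinate ring $S/I$ of $C$ (with $S=\C[x_0,\dots,x_n]$). Since $S/I$ is standard graded, generated in degree one, this holds as soon as $\xi\geq 1$. The main obstacle is thus simply verifying $\xi=\sum_i d_i-n-1\geq 1$, and I expect this to be exactly where the case assumptions enter: $d_i\geq 2$ already forces $\xi\geq 1$ for $n\geq 4$, while the extra conditions $d_1\geq 3$ for $n=3$ and $d_1\geq 4$ for $n=2$ are precisely what is needed in the low-dimensional cases (these also guarantee $g\geq 2$ and hence the vanishing $H^1(\omega_C^{\otimes 2})=0$ used at the start). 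Combining $h^1(N^*_C(\xi))=h^0(N_C)$ with $h^0(T_{\P^n|C})=(n+1)^2-1$ then yields $\dim\mathrm{coker}(\alpha)=h^0(N_C)-(n+1)^2+1$, as claimed.
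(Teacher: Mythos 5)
Your proof is correct and follows essentially the same route as the paper: the twisted conormal sequence reduces the cokernel to $h^0(N_C)-h^1(\Omega^1_{\P^n|C}(\xi))$, and the Euler sequence together with projective normality and Serre duality gives $h^1(\Omega^1_{\P^n|C}(\xi))=(n+1)^2-1$. The only (cosmetic) difference is that you Serre-dualize first and run the Euler-sequence computation on $T_{\P^n|C}$ rather than on $\Omega^1_{\P^n|C}(\xi)$ directly; the key input --- surjectivity of the multiplication map $(S/I)_1\otimes(S/I)_{\xi-1}\to(S/I)_\xi$, guaranteed by $\xi\geq 1$ and projective normality --- is identical in both arguments.
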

 \begin{proof}
 We use the conormal exact sequence
 \begin{equation}\label{eq:conseq}
  0\to N^*_C\left(\xi\right)\to \Omega_{\P^n|C}^1\left(\xi\right)\to \omega_C\left(\xi\right)\to 0.
 \end{equation}
 From its cohomology and Serre duality we find
 \[
 \dim\mathrm{coker}(\alpha)=h^0(N_C)-h^1( \Omega^1_{\P^n|C}(\xi) ).
 \]
 In order to compute $h^1(\Omega^1_{\P^n|C}(\xi))$, define the vector space $V:=H^0(\P^n,\mathcal{O}_{\P^n}(1))=H^0(C,\mathcal{O}_C(1))\cong \C^{n+1}$ and 
 consider the restriction to $C$ of the Euler sequence of $\Omega_{\P^n}^1$:
 \begin{equation}\label{eeulseq}
  0\to \Omega_{\P^n|C}\left(\xi\right)\to V\otimes \mathcal{O}_C\left(\xi-1\right)\stackrel{\mu}{\to} \mathcal{O}_C\left(\xi\right)\to 0.
 \end{equation}
 Since $\xi-1\geq 0$ and $C$ is projectively normal, the map $\mu$ is a surjective multiplication map on global sections. From the long exact
 sequence of cohomology of (\ref{eeulseq}) and Serre duality, it thus follows that $h^1(\Omega^1_{\P^n|C}(\xi))=(n+1)^2-1$, proving the lemma.
\end{proof}

Now define $g:=\frac{\xi\prod_i d_i}{2}+1$, $\zeta:=\frac{\xi}{h}$, $r:=h^0(C,\mathcal{O}_C(\zeta))-1$
and consider the diagram
\begin{equation}\begin{diagram}\label{diagram-us}
  H^0(\P^n,\mathcal{O}_{\P^n}(\zeta))\otimes H^0(\P^n,\mathcal{O}_{\P^n}((h-1)\zeta))  & \rTo &  H^0(\P^n,\Omega^1_{\P^n}(\xi))\\
  \dTo & & \dTo_\alpha\\
  H^0(C,\mathcal{O}_C(\zeta))\otimes H^0(C,\mathcal{O}_C((h-1)\zeta))  & \rTo^{\gamma_{1,h-1}} &  H^0(C,\Omega^1_C(\xi)).\\
 \end{diagram}\end{equation}
 
 We can state the following Theorem.
\begin{teo}\label{teofinal}
For every $h\geq 2$ and $d_i, h, g, r$ as above, the curve $[C]$ is a smooth point of an irreducible component of $\mathfrak{Th}^r_{g,h}$ whose general member is a complete intersection curve.
\end{teo}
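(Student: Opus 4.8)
The plan is to prove the statement by a dimension count that squeezes the local dimension of $\mathfrak{Th}^r_{g,h}$ at $[C]$ between two quantities that will turn out to be equal: the dimension of its Zariski tangent space, which bounds the local dimension from above, and the dimension of the family $\Sigma$ of smooth complete intersections of type $(d_1,\dots,d_{n-1})$, which bounds it from below. Once these two agree, $[C]$ is forced to be a smooth point and $\Sigma$ must be dense in the irreducible component through $[C]$, so that its general member is a complete intersection. The three dimensions I need are supplied, respectively, by Theorem \ref{teofon}, by Lemma \ref{lemma3.1}, and by a parameter count for complete intersections.

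First I would identify the tangent space. Since $L:=\mathcal{O}_C(\zeta)$ satisfies $hL=\mathcal{O}_C(h\zeta)=\mathcal{O}_C(\xi)=\omega_C$ and $h^0(L)=h^0(\mathcal{O}_C(\zeta))=r+1$, the curve $[C]$ lies in $\mathfrak{Th}^r_{g,h}$, and the bottom arrow $\gamma_{1,h-1}$ of diagram \eqref{diagram-us} is exactly the map $\mu_h$ appearing in Theorem \ref{teofon}, whose codomain $H^0(C,\Omega^1_C(\xi))=H^0(C,\omega_C^{\otimes 2})$ is the (dual) deformation space of $C$. Hence $\dim T_{[C]}\mathfrak{Th}^r_{g,h}=\dim\mathrm{coker}(\gamma_{1,h-1}(C,L))$, and everything reduces to computing this cokernel through the commutative diagram \eqref{diagram-us}.

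Next I would run the diagram chase. The top map is surjective by Lemma \ref{lemma1} (for $h=2$, where $a=b=1$, the same equivariance argument applies verbatim, since $H^0(\P^n,\Omega^1_{\P^n}(2\zeta))$ is an irreducible $\mathrm{GL}(n+1)$-representation and the map is nonzero). Because a complete intersection is projectively normal (arithmetically Cohen--Macaulay), the two restriction maps $H^0(\P^n,\mathcal{O}_{\P^n}(\zeta))\to H^0(C,\mathcal{O}_C(\zeta))$ and $H^0(\P^n,\mathcal{O}_{\P^n}((h-1)\zeta))\to H^0(C,\mathcal{O}_C((h-1)\zeta))$ are surjective, so the left vertical arrow is surjective as well. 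Commutativity then gives $\mathrm{Im}(\gamma_{1,h-1}(C,L))=\mathrm{Im}(\alpha)$, where $\alpha$ is the right vertical restriction map. Checking via Lemma \ref{lemma3} that $\rho_{\xi,C}$ is surjective, i.e. that the homogeneous ideal of $C$ carries no minimal generator of degree $\xi$, identifies this image with that of the cotangent map of Lemma \ref{lemma3.1}; that lemma then yields $\dim\mathrm{coker}(\gamma_{1,h-1}(C,L))=h^0(N_C)-(n+1)^2+1$. On the geometric side, every smooth complete intersection $C'$ of the given type has $\omega_{C'}=\mathcal{O}_{C'}(\xi)$ and the $h$-th root $\mathcal{O}_{C'}(\zeta)$ with $h^0$ constantly equal to $r+1$ (all such $C'$ share the same cohomology), so $\Sigma\subseteq\mathfrak{Th}^r_{g,h}$ is irreducible and contains $[C]$; since the Hilbert scheme is smooth at $[C]$ of dimension $h^0(N_C)$ and the generic $\mathrm{PGL}(n+1)$-orbit has dimension $(n+1)^2-1$ under the numerical hypotheses (which rule out the rational and elliptic cases and force generically finite stabilizers), one gets $\dim\Sigma=h^0(N_C)-(n+1)^2+1$.

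Finally I would close the loop: combining the inequalities
\[
\dim\Sigma=h^0(N_C)-(n+1)^2+1=\dim T_{[C]}\mathfrak{Th}^r_{g,h}\geq \dim_{[C]}\mathfrak{Th}^r_{g,h}\geq \dim\Sigma
\]
forces all of them to be equalities, whence $[C]$ is a smooth point of the component of $\mathfrak{Th}^r_{g,h}$ through it and that component is the closure of $\Sigma$, proving the claim. The hard part will be the middle step, namely the precise matching of the two cokernels: one must genuinely verify both the projective normality that makes the left vertical map surjective and the absence of a degree-$\xi$ minimal generator that makes $\rho_{\xi,C}$ surjective (so that $\mathrm{coker}(\gamma_{1,h-1})$ coincides with the cotangent cokernel of Lemma \ref{lemma3.1} rather than being strictly larger), together with the generic finiteness of the $\mathrm{PGL}(n+1)$-action needed to pin down $\dim\Sigma$; everything else is formal once these surjectivities are in place.
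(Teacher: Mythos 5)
Your proposal follows essentially the same route as the paper: the same commutative diagram \eqref{diagram-us}, surjectivity of the top map via Lemma \ref{lemma1} and of the left vertical map via projective normality, the cokernel computation of Lemma \ref{lemma3.1}, the tangent-space identification of Theorem \ref{teofon}, and the same parameter count $h^0(N_C)-(n+1)^2+1$ for the locus of complete intersections, closed by the same squeeze argument. If anything you are more scrupulous than the paper, which silently passes from the cokernel of the composite restriction $\alpha$ to that of the cotangent map of Lemma \ref{lemma3.1} (i.e.\ tacitly assumes $\rho_{\xi,C}$ surjective) and applies Lemma \ref{lemma1} in the case $a=b=1$ although it is stated only for $b>a$; you correctly flag both points as needing verification.
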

\begin{proof}
 Let $\mathbb {D}$ be the subset of the Hilbert scheme $\mbox{Hilb}(\mathbb {P}^n)$ of $\mathbb {P}^n$ parametrizing the smooth complete intersection of type $d_1,\dots , d_{n-1}$ of $\mathbb {P}^n$. The set $\mathbb {D}$ is an open subset of $\mbox{Hilb}(\mathbb {P}^n)$ (\cite[p. 236]{S06}). Hence for
 each $C\in \mathbb {D}$ the tangent space of $\mathbb {D}$ at $C$ has dimension $h^0(N_C)$. Since $\mathbb {D}$ is smooth (\cite[p. 236]{S06}),
 we have $\dim (\mathbb {D}) = h^0(N_C)$. The set $\mathbb {D}$ is irreducible, because it is parametrized by an open subset of the vector space $\prod _{i=1}^{n-1} H^0(\mathbb {P}^n,\mathcal {O}_{\mathbb {P}^n}(d_i))$. Since $C$ has genus $\ge 2$, only finitely many automorphisms of $\mathbb {P}^n$ sends $C$ into itself. The adjunction formula
 gives $\omega _C \cong \mathcal {O}_C(d_1+\cdots +d_{n-1} -n-1)$. Set $e:= d_1+\cdots +d_{n-1} -n-1$. Since $C$ has only finitely many line bundles $L$ such
 that $L^{\otimes e} \cong \omega _C$, up to projective isomorphism $C$ has only finitely many embeddings into $\mathbb {P}^n$ as a complete intersection of type $d_1,\dots ,d_{n-1}$. Hence the locus $\Gamma\subset \mathcal{M}_g$ of all complete intersection curves is an irreducible variety of dimension 
 $ h^0(N_C)-\dim\mathrm{Aut}(\P^n)=h^0(N_C)-(n+1)^2+1$ and it is contained in an irreducible component $\mathcal{Z}$ of $\mathfrak{Th}^r_{g,h}$.
 From diagram (\ref{diagram-us}), Lemma \ref{lemma1} and the fact that $C$ is projectively normal, we see that the map $\gamma_{1,h-1}(C,\mathcal{O}_C(\zeta))$ has cokernel equal to the cokernel of the map $\alpha$ and hence of dimension exactly $h^0(N_C)-(n+1)^2+1$.
 Then, Theorem \ref{teofon} implies $\dim(T_C\mathcal{Z})=h^0(N_C)-(n+1)^2+1$ and we can conclude that the curve $[C]$ is a
 smooth point of the component. In particular, $\Gamma=\mathcal{Z}$ and hence the claim follows.
\end{proof}

\vspace{0.3cm}
\begin{footnotesize}\noindent\textsc{Universit\`{a} degli Studi di Trento, Dipartimento di Matematica, \\ Via Sommarive 14,  I-38123 Povo (TN)}\end{footnotesize}

\noindent E-mail addresses: \verb=edoardo.ballico@unitn.it=, \verb=pernigotti@science.unitn.it=


\begin{thebibliography}{20}

\bibitem[ACGH85]{ACGH85} E. Arbarello, M. Cornalba, P. A. Griffiths and J. Harris, \emph{Geometry of algebraic curves. Vol. I}, Grundlehren der Mathematischen Wissenschaften, 267. Springer-Verlag, New York, 1985.

\bibitem[BF06]{BF06} E. Ballico, C. Fontanari, \emph{Weighted Gaussian maps}, Beitr\"{a}ge Algebra Geom. 47 (2006), 63–66. 

\bibitem[BF04]{BF04} E. Ballico, C. Fontanari, \emph{On the surjectivity of higher Gaussian maps for complete intersection curves}, Ricerche Mat. 53 (2004), 79--85.

\bibitem[Be13]{Be13} L. Benzo, \emph{Components of moduli spaces of spin curves with the expected codimension}, arXiv:1307.6954, 2013.

\bibitem[CHM88]{CHM88} C. Ciliberto, J. Harris, R. Miranda, \emph{On the surjectivity of the Wahl map}, Duke Math. J. 57 (1988), no. 3, 829–858.

\bibitem[CLM00]{CLM00} C. Ciliberto, A. F. Lopez, R. Miranda, \emph{On the Wahl map of plane nodal curves}, Complex analysis and algebraic geometry, 155–163, de Gruyter, Berlin, 2000

\bibitem[Far05]{Far05} G. Farkas, \emph{Gaussian maps, Gieseker-Petri loci and large theta-characteristics}, J. Reine Angew. Math. 581 (2005), 151–173.

\bibitem[Fon02]{Fon02} C. Fontanari, \emph{Theta loci and deformation theory}, Atti Accad. Naz. Lincei Cl. Sci. Fis. Mat. Natur. Rend. Lincei (9) Mat. Appl. 13 (2002), 135–142. 

\bibitem[G84]{G84} M. L. Green, \emph{Koszul cohomology and the geometry of projective varieties}, J. Differential Geom. 19 (1984), 125–171.

\bibitem[GL88]{GL88} M. L. Green, R. Lazarsfeld, \emph{Some results on the syzygies of finite sets and algebraic curves}, Compositio Math. 67 (1988), 301–314.

\bibitem[H82]{H82} J. Harris, \emph{Theta-characteristics on algebraic curves}, Trans. Amer. Math. Soc. 271 (1982), 611–638.

\bibitem[H77]{Hartshorne} R. Hartshorne, \emph{Algebraic geometry}, Graduate Texts in Mathematics, No. 52. Springer-Verlag, New York-Heidelberg, 1977. 

\bibitem[N90]{N90} D. S. Nagaraj, \emph{On the moduli of curves with theta-characteristics}, Compositio Math. 75 (1990), 287–297.

\bibitem[S06]{S06} E. Sernesi, \emph{Deformations of algebraic schemes}, Springer, Berlin, 2006. 

\bibitem[W87]{W87} J. Wahl, \emph{The Jacobian algebra of a graded Gorenstein singularity}, Duke Math. J. 55 (1987), 843–871.

\bibitem[W90]{W90} J. Wahl, \emph{Gaussian maps on algebraic curves}, J. Differential Geom. 32 (1990), 77–98.

\end{thebibliography}
\end{document}